\theoremstyle{plain}
\newtheorem{theorem}{Theorem}
\theoremstyle{remark}
\newtheorem*{remark}{Remarks}
\begin{document}

\title[Quantum stochastic integrals]{Quantum stochastic integrals
as operators}
\author{Andrzej \L uczak}
\address{Faculty of Mathematics and Computer Science, \L \'od\'z
University, \ ul. Banacha 22,
90--238 \L\'od\'z, Poland}
\email{anluczak@math.uni.lodz.pl}
\thanks{Work supported by KBN grant 2 P03A 03024}
\keywords{Quantum stochastic integrals, quantum martingales,
adapted processes}
\subjclass{Primary: 81S25; Secondary: 46L53}

\begin{abstract}
We construct  quantum stochastic integrals for the
integrator being a martingale in  a von Neumann algebra, and the
integrand --- a suitable process with values in the same algebra, as
densely defined operators affiliated with the algebra. In the case
of a finite algebra we allow the integrator to be an
$L^2$--martingale in which case the integrals are $L^2$--martingales
too.
\end{abstract}
\maketitle

\section*{Introduction} The theory of `general' quantum stochastic
integrals (i.e. not founded on Fock space) deals mainly with the
setup which can be roughly described as follows: For a  von Neumann
algebra $\mathcal{A}$ with a filtration
$\{\mathcal{A}_t:t\geqslant0\}$ we have a corresponding process
$(X(t): t \geqslant 0)$ with values in $L^p(\mathcal{A})$, and a
corresponding process \ $(f(t): t \geqslant 0)$ \ with values in \
$L^q(\mathcal{A})$, $1/p + 1/q = 1/2$, $2 \leqslant p$,  $q \leqslant
+ \infty$, $L^p(\mathcal{A})$ and $L^q(\mathcal{A})$ being
appropriate noncommutative $L^p$-spaces. Then under various specific
assumptions about $X$ and $f$, among which the most natural is that
$X$ is a martingale, one can define stochastic integrals $\int^b_a f
\, dX $ and  $\int^b_a dX \, f$ as elements of $L^2(\mathcal{A})$
(cf. \cite{1,4,5,7,8,9,10}).  The advantage of this is that, for any
reasonable definition of the integral, it may be approximated by
integral sums of the form $\sum f(t_{i-1})[X (t_{i}) - X(t_{i-1})]$
(for $\int^b_a f \, dX $, the other one being defined in complete
analogy). Now a product of elements from $L^p(\mathcal{A})$ and
$L^q(\mathcal{A})$, is an element from $L^r(\mathcal{A})$, where $1/p
+ 1/q = 1/r$. In particular, the approximation holds in the case
$r=2$ (in a Hilbert space). However, this approach can be exploited
also in the following quite natural setting. If we assume that the
algebra $\mathcal{A}$ acts in a Hilbert space $\mathcal{H}$, and if
we let $f$ and $X$ take their values in $\mathcal{A}$, then the
integral sum belongs to $\mathcal{A}$ too, and we may ask about its
behavior on elements  of $\mathcal{H}$. This again leads us to
approximation of the integral evaluated at some points of
$\mathcal{H}$, i.e., we come to the notion of integrals $\int^b_a f
\, dX $ and  $\int^b_a dX \, f$ as operators on $\mathcal{H}$. This
idea has been carried out in \cite{3} for a particular class of
martingales $X$ defined `canonically' in quasi-free representations
of the CCR and CAR.

It turns out that it is possible to apply this point of view in the
following situation: As the integrator we take a monotone or
norm-continuous $\mathcal{A}$-valued process. Then we show that there
exists a Riemann-Stieltjes type integral $\int_a^b f\,dX$ as a
densely defined operator affiliated with the algebra $\mathcal{A}$.
In the case when $\mathcal{A}$ is finite, we show the existence of
both the integrals $\int_a^b f\,dX \text{ and }\int_a^b dX\,f$ as
densely defined closed operators affiliated with $\mathcal{A}$.
Moreover, we can weaken the assumptions about $(X(t))$ and allow it
to be a martingale in $L^2(\mathcal{A})$, in which case the integrals
above will be elements from $L^2(\mathcal{A})$ too.

Finally, let us say a few words about quantum stochastic
integrals in general. In the existing theories of quantum
integration, especially those of Lebesgue type, the classes of
`theoretically admissible' integrands are rather narrow and lack
any \emph{concrete} examples of processes that can be integrated.
On the other hand for integrals of Riemann-Stieltjes type such
examples have been provided, and, for example, it was shown how
one can integrate predictable processes and how integration with
respect to a quantum random time can be performed (cf.
\cite{8,9}). In this note we take a similar approach showing the
possibility of integrating either a monotone or norm-continuous
process with respect to a martingale. It is worth noting that in
the case of a monotone process, apparently nothing can be said
about the existence of Lebesgue type integrals, while in the case
of a norm-continuous process its existence can be proved only
under some additional assumption.

\section{Preliminaries and notation} Throughout the paper, we assume
that $\mathcal{A}$ is a von  Neumann algebra acting in a Hilbert
space $\mathcal{H}$ with inner product $\langle \cdot, \cdot
\rangle$, and that $\omega$ is a normal faithful state given by a
cyclic and separating unit vector $\varOmega$ in  $\mathcal{H}$,
i.e.,  $\mathcal{A}$ is represented in standard form. We suppose,
further,  that  we have an increasing family $\{\mathcal{A}_t:
t\in[0,+\infty)\}$ of von Neumann subalgebras of $\mathcal{A}$
($\mathcal{A}_s\subset\mathcal{A}_t$ for $s\leqslant t$), called a
filtration, and a corresponding family $\{ \mathbb{E}_t\}$ of normal
conditional expectations from $\mathcal{A}$ onto $\mathcal{A}_t$
leaving $\omega$ invariant.

A \emph{process} in $\mathcal{A}$ or $L^2(\mathcal{A})$ is a function
defined on $[0,+\infty)$ with values in $\mathcal{A}$ or
$L^2(\mathcal{A})$, respectively. We shall denote by $f$, processes
in $\mathcal{A}$, and by $X$, processes either in $\mathcal{A}$ or
$L^2(\mathcal{A})$. Following the notation of probability theory, we
shall sometimes denote a process by \linebreak $(X(t):t \geqslant 0)$
(a family of `random variables'), and the same applies to $f$.

The norms in $\mathcal{H}\text{ and }L^2(\mathcal{A})$ will be
denoted $\|\cdot\|_{\mathcal{H}}\text{ and }\|\cdot\|_2$,
respectively, while $\|\cdot\|$ will stand for the operator norm.

Define on (a dense subspace of) $\mathcal{H}$ operators $P_t$ given
by:
\begin{equation}\label{1}
P_t(x\varOmega) = (\mathbb{E}_tx)\varOmega,\qquad x\in\mathcal{A}.
\end{equation}
It is well--known (cf. e.g. \cite{6} Propositions 1.1 and 1.2) that\\
$(P_t:t\geqslant0)$ is an increasing family of orthogonal projections
in $\mathcal{H}$  with ranges $\mathcal{H}_t =
\overline{\mathcal{A}_t \varOmega}$. Moreover,
$P_t\in\mathcal{A}_t'$, where $\mathcal{A}_t$ is the commutant of
$\mathcal{A}_t$.

A process in  $\mathcal{A}$ or $L^2(\mathcal{A})$ will be called
\emph{adapted} if its value at each point $t\geqslant0$ belongs to
$\mathcal{A}_t$ or $L^2(\mathcal{A}_t)$, respectively. We call
processes $f$ in  $\mathcal{A}$, and $X$ in $L^2(\mathcal{A})$
\emph{martingales} if for any $s,t \in [0,+\infty)$,
 $s \leqslant t$, the equalities
\[
 \mathbb{E}_s f(t) = f(s), \qquad \mathbb{E}_s X(t) = X(s)
\]
hold, where in the $L^2$-case we use the same symbol $\mathbb{E}_s$
to denote the extension of the conditional expectation to
$L^2(\mathcal{A})$. It follows that a martingale is an adapted
process.

Let $(X(t):t\in[0,+\infty))$ be a process, and let $0\leqslant
t_0\leqslant t_1\leqslant\dots\leqslant t_m<+\infty$ be a sequence of
points. To simplify the notation we put
\[
 \Delta X(t_k)=X(t_k)-X(t_{k-1}),\qquad k=1,\dots,m.
\]

Let $(X(t):t\in[0,+\infty)),\,(f(t):t\in[0,+\infty))$ be arbitrary
processes, and let $[a,b]$ be a subinterval of $[0,+\infty)$. For a
partition \linebreak $\theta=\{a=t_0<t_1<\dots<t_m=b\}$ of $[a,b]$ we
form left and right integral sums
\[
 S_{\theta}^l=\sum_{k=1}^m\Delta X(t_k)f(t_{k-1}),\qquad
 S_{\theta}^r=\sum_{k=1}^m f(t_{k-1})\Delta X(t_k).
\]
If there exist limits (in any sense) of the above sums as $\theta$ is
refined, we call them respectively the \emph{left} and \emph{right
stochastic integrals} of $f$ with respect to $(X(t))$, and denote
\[
 \lim_{\theta}S_{\theta}^l=\int_a^b dX\,f,\qquad
 \lim_{\theta}S_{\theta}^r=\int_a^b f\,dX.
\]
This notion of integral is a weaker one than defining
the integrals as the limits
\[
 \int_a^b dX\,f=\lim_{\|\theta\|\to 0}S_{\theta}^l,\qquad
 \int_a^b f\,dX=\lim_{\|\theta\|\to 0}S_{\theta}^r,
\]
where $\|\theta\|$ stands for the mesh of the partition $\theta$. A
definition of this kind is standard in the classical theory of
Riemann-Stieltjes as well as the theory of stochastic integrals. It
is worth noticing that in noncommutative integration theory, whenever
this Riemann-Stieltjes type integral is considered, its definition
refers to the weaker form of the limit with the refining net of
partitions (cf. \cite{2,8,9}). However, under additional assumptions
we shall be able to obtain the integral also in the stronger sense
thus making it similar to the classical stochastic integral.

\section{Integrals as operators on $\mathcal{H}$ --- the non-tracial case}
Our construction of the integral is given by the following

\begin{theorem}\label{mi} Let $(X(t): t\geqslant 0)$ be a martingale
in $\mathcal{A}$, and let\linebreak $f\colon
[0,\infty)\to\mathcal{A}$ be a hermitian adapted process such that
$f$ is monotone. Then for each $t>0$ there exists a
Riemann--Stieltjes type integral $\int^t_0 f\,dX$, which is a
densely defined operator on $\mathcal{H}$ affiliated with
$\mathcal{A}$.
\end{theorem}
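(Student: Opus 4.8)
The plan is to fix $t>0$, restrict to $[0,t]$, and work with the right integral sums $S^r_\theta=\sum_k f(t_{k-1})\,\Delta X(t_k)$. Since each $S^r_\theta$ lies in $\mathcal{A}$ and $\varOmega$ is cyclic and separating for $\mathcal{A}$, hence also for $\mathcal{A}'$, I would first note that for $a'\in\mathcal{A}'$ one has $S^r_\theta(a'\varOmega)=a'(S^r_\theta\varOmega)$. Thus it suffices to prove that the net $(S^r_\theta\varOmega)$ converges in $\mathcal{H}$ as $\theta$ runs through the partitions of $[0,t]$ ordered by refinement. If $Y$ denotes its limit, the formula $T(a'\varOmega)=a'Y$ defines, unambiguously since $\varOmega$ is separating for $\mathcal{A}'$, an operator on the dense domain $\mathcal{A}'\varOmega$. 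Because $u'Tu'^{*}=T$ on this domain for every unitary $u'\in\mathcal{A}'$, its closure is affiliated with $\mathcal{A}''=\mathcal{A}$, which is the conclusion sought.

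Writing $\xi(s)=X(s)\varOmega$, the heart of the argument is an orthogonality (``It\^o isometry'') property of these sums. Using the martingale identity in the form $P_{s}\xi(u)=\xi(s)$ for $s\le u$, which follows from $P_s(x\varOmega)=(\mathbb{E}_s x)\varOmega$ and $\mathbb{E}_sX(u)=X(s)$, the increment $\Delta\xi(t_l)$ lies in the orthogonal complement of $\mathcal{H}_{t_{l-1}}$. Since $f(t_{l-1})\in\mathcal{A}_{t_{l-1}}$ and $P_{t_{l-1}}\in\mathcal{A}_{t_{l-1}}'$ commute, $f(t_{l-1})$ preserves both $\mathcal{H}_{t_{l-1}}$ and its complement, so $f(t_{l-1})\Delta\xi(t_l)$ remains orthogonal to $\mathcal{H}_{t_{l-1}}$; while for $k<l$ the vector $f(t_{k-1})\Delta\xi(t_k)$ lies in $\mathcal{H}_{t_k}\subseteq\mathcal{H}_{t_{l-1}}$. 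Hence the summands of $S^r_\theta\varOmega$ are mutually orthogonal, giving $\|S^r_\theta\varOmega\|^2=\sum_k\|f(t_{k-1})\Delta\xi(t_k)\|^2\le M^2E$, where $M=\sup_{[0,t]}\|f(s)\|<\infty$ (finite by monotonicity, $f$ lying between $f(0)$ and $f(t)$) and $E=\|\xi(t)-\xi(0)\|^2=\sum_k\|\Delta\xi(t_k)\|^2$ is the partition-independent quadratic energy of the martingale.

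Next I would show that $(S^r_\theta\varOmega)$ is Cauchy along the refining net. For $\theta\subseteq\theta'$ a telescoping computation on each subinterval gives $S^r_{\theta'}\varOmega-S^r_\theta\varOmega=\sum_n g_n\Delta\xi_n$, the sum running over the increments of $\theta'$, with $g_n=f(u_n)-f(c_n)$ where $u_n$ is the left endpoint of the $\theta'$-increment and $c_n$ the left endpoint of the enclosing $\theta$-cell; monotonicity bounds $g_n$ by the oscillation of $f$ on that cell, and in particular $g_n=0$ for the first increment in each cell. The same orthogonality applies to this refined family (each $g_n\in\mathcal{A}_{u_n}$ and $\Delta\xi_n$ is orthogonal to $\mathcal{H}_{u_n}$), so $\|S^r_{\theta'}\varOmega-S^r_\theta\varOmega\|^2=\sum_n\langle g_n^2\Delta\xi_n,\Delta\xi_n\rangle$. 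Making this uniformly small once $\theta$ is large enough is where I expect the real difficulty to lie. Introducing the finite positive energy measure $\nu$ determined by $\nu((a,b])=\|\xi(b)-\xi(a)\|^2$, one controls the sum by the $\nu$-integral of the squared cell-oscillation of $f$; this oscillation decreases under refinement, and the only obstruction is the interaction of the (at most countably many) discontinuities of $f$ with the atoms of $\nu$, i.e.\ the points where $f$ and $X$ jump together. These are handled by putting the finitely many points carrying the bulk of the common jump-energy into $\theta$, where the associated coefficients $g_n$ vanish, the remainder being small because $\nu$ is finite. This yields the Cauchy property and hence the limit $Y$.

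Finally, with $Y$ in hand I would verify that the operator $T$ on $\mathcal{A}'\varOmega$ is closable and that its closure commutes with every unitary of $\mathcal{A}'$, so that $\int_0^t f\,dX:=\overline{T}$ is a densely defined operator affiliated with $\mathcal{A}$. The one genuinely delicate step in the whole scheme is the convergence at common discontinuities of $f$ and $X$ treated in the third paragraph; the remainder is either the clean orthogonality computation or routine verification.
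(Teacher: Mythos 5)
Your reduction to the convergence of $S^r_{\theta}\varOmega$, the identity $S^r_{\theta}\varOmega=\sum_k f(t_{k-1})(P_{t_k}-P_{t_{k-1}})X(t)\varOmega$, the orthogonality of the summands, and the affiliation argument all match the paper. The gap is in your third paragraph, which is indeed where the whole proof lives. Your plan is to bound $\|S^r_{\theta'}\varOmega-S^r_{\theta}\varOmega\|^2=\sum_n\langle g_n^2\Delta\xi_n,\Delta\xi_n\rangle$ by $\sum_{J\in\theta}\operatorname{osc}(f;J)^2\,\nu(J)$ and drive this to zero by refining away from the ``countably many'' discontinuities of $f$. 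This fails for monotone \emph{operator-valued} $f$: the paper's own closing remark exhibits $f(t)=e([0,t])$ for a spectral measure $e$ with support $[0,\infty)$, for which $\|f(t')-f(t'')\|=1$ whenever $t'\neq t''$. For such $f$ the oscillation of every cell is $1$, your upper bound equals $\nu([0,t])$ for every partition, and the set of norm-discontinuities is all of $[0,t]$, not countable (countability of the discontinuity set is a scalar phenomenon that does not survive passage to operator norm). So the estimate you propose cannot establish the Cauchy property, and no selection of finitely many ``jump points'' repairs it.

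The paper's argument exploits monotonicity at the operator level instead of through norm oscillation. Since $f(t_{k-1})\in\mathcal{A}_{t_{k-1}}$ commutes with $P_{t_k}-P_{t_{k-1}}$, the operator $\sigma^r_{\theta}=\sum_k f(t_{k-1})(P_{t_k}-P_{t_{k-1}})$ is self-adjoint; a one-point refinement $\theta'=\theta\cup\{t'\}$ gives
\[
\sigma^r_{\theta'}-\sigma^r_{\theta}=(P_{t_{j+1}}-P_{t'})\,[f(t')-f(t_j)]\,(P_{t_{j+1}}-P_{t'})\geqslant 0
\]
(for $f$ increasing), so the net $\{\sigma^r_{\theta}\}$ is increasing; and $\|\sigma^r_{\theta}\|\leqslant\sup_{0\leqslant s\leqslant t}\|f(s)\|$ by the same orthogonality you noted. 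An increasing, norm-bounded net of self-adjoint operators converges in the strong operator topology, and evaluating at $X(t)\varOmega$ gives $\lim_{\theta}S^r_{\theta}\varOmega$ along the refining net. This is the idea your proposal is missing; note that it yields convergence only along the net ordered by refinement (not as $\|\theta\|\to0$), which is exactly the form in which the theorem is stated, the stronger mesh limit being reserved in the paper for norm-continuous $f$.
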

\begin{proof}Fix $t>0$,and let $\theta = \{0 = t_0 < \ldots < t_m = t\}$
be a partition of $[0,t]$. Put
\[
 S^r_{\theta}= \sum^m_{k=1} f(t_{k-1})[X (t_k)-X(t_{k-1})].
\]
We want to define $\int^t_0 f \, dX$ as an operator on
$\mathcal{A}'\varOmega$ (where $\mathcal{A}'$ is the commutant of
$\mathcal{A}$) by
\[
 \int^t_0 f\, dX (x'\varOmega) =  \lim_{\theta }
 S^r_{\theta}(x'\varOmega),\qquad x'\in\mathcal{A}',
\]
as $\theta$ is refined. For this it is sufficient to show the
existence of the limit $\lim_{\theta}S^r_{\theta}\varOmega$, since
\[
  S^r_{\theta}\, x' = x'\,S^r_{\theta}.
\]
We have
\[
 \begin{split}
 S^r_{\theta}\,\varOmega &=\sum^m_{k=1}f(t_{k-1})[X (t_k) - X(t_{k-1})]\,\varOmega\\
 &=\sum^m_{k=1}f(t_{k-1})[\mathbb{E}_{t_k} X(t)-\mathbb{E}_{t_{k-1}}X(t)]\,\varOmega\\
 &=\sum^m_{k=1}f(t_{k-1})(P_{t_k} - P_{t_{k-1}}) \,X(t)\varOmega,
 \end{split}
\]
where the $P_t$ are projections defined by \eqref{1}. Consider
the operator sum
\begin{equation}\label{2}
 \sigma^r_{\theta}=\sum^m_{k=1}f(t_{k-1})(P_{t_k}-P_{t_{k-1}})=
 \sum^m_{k=1}(P_{t_k}-P_{t_{k-1}})\,f(t_{k-1})=(\sigma^r_{\theta})^*.
\end{equation}
To fix attention, assume that $f$ is increasing (i.e., $f(s)\leqslant
f(t)$ for $s\leqslant t$), and let $\theta'=\theta\cup\{t'\}$ for
some $t_j<t'< t_{j+1}$ be a one-point refinement of $\theta$. Then
\[
\begin{split}
\sigma^r_{\theta'} - \sigma^r_{\theta} & = f(t_j) (P_{t'} - P_{t_j})
+f(t') (P_{t_{j+1}} - P_{t'})
- f(t_j) (P_{t_{j+1}} - P_{t_j})\\
& = [f(t') -  f(t_j)]  (P_{t_{j+1}} - P_{t'}) \\
& =  (P_{t_{j+1}} - P_{t'})   [f(t') -  f(t_j)]  (P_{t_{j+1}} -
P_{t'}) \geqslant 0,
\end{split}
\]
because $P_{t_{j+1}} - P_{t'}$ is a projection commuting with
$f(t_j)$ and  $f(t')$, and  $f(t')-f(t_j)\geqslant0$. It follows
that the net $\{\sigma^r_{\theta}\}$ is increasing. Furthermore,
for each $k=1,\ldots,m$ we have
\[
 (P_{t_k}-P_{t_{k-1}})\,f(t_{k-1})^2(P_{t_k}-P_{t_{k-1}})
 \leqslant\| f(t_{k-1})\|^2(P_{t_k}-P_{t_{k-1}}).
\]
Hence
\[
 \begin{split}
 \sum^m_{k=1}&(P_{t_k}-P_{t_{k-1}})\,f(t_{k-1})^2(P_{t_k}-
 P_{t_{k-1}})\leqslant\sum^m_{k=1}\| f(t_{k-1})\|^2(P_{t_k}-P_{t_{k-1}})\\
 &\leqslant c^2 \sum^m_{k=1}(P_{t_k}-P_{t_{k-1}})= c^2(P_t-P_0),
 \end{split}
\]
where $c=\sup_{0\leqslant s\leqslant t}
\|f(s)\|=\max\{\|f(0)\|,\|f(t)\|\}$. Consequently,
\[
 \begin{split}
 \|\sigma^r_{\theta}\|^2 & = \|( \sigma^r_{\theta})^2\|=
 \Big\|\sum^m_{i,j=1}(P_{t_j}-P_{t_{j-1}})f(t_{j-1}) f(t_{i-1})
 (P_{t_i} - P_{{t_{i-1}}})\Big\| \\
 &=\Big\|\sum^m_{i=1}(P_{t_i}-P_{t_{i-1}})f(t_{i-1})^2(P_{t_i}-P_{t_{i-1}})
 \Big\|\leqslant c^2\|P_t-P_0\|=c^2,
 \end{split}
\]
which means that $\{\sigma^r_{\theta}\}$ is norm-bounded. This,
together with the fact that the net is increasing, yields the
existence of $\lim_{\theta}\sigma^r_{\theta}$ in the strong
operator topology, in particular, there exists
\[
 \lim_{\theta}\sigma^r_{\theta}(X(t)\varOmega)
 = \lim_{\theta}S^r_{\theta}\varOmega.
\]
It is clear that for each $x',y'\in\mathcal{A}'$ we have
\begin{align*}
 &\bigg(\int_0^t
 f\,dX\bigg)y'(x'\varOmega)=\lim_{\theta}S_{\theta}^ry'(x'\varOmega)=\\
 &y'\lim_{\theta}S_{\theta}^r(x'\varOmega)=y'\bigg(\int_0^t
 f\,dX\bigg)(x'\varOmega),
\end{align*}
thus $\int_0^t f\,dX $ is affiliated with $\mathcal{A}''=\mathcal{A}$.
\end{proof}
It turns out that even a stronger form of integral can be obtained
for norm-continuous processes.
\begin{theorem}\label{ci}
Let $(X(t):t\geqslant 0)$ be a martingale in $\mathcal{A}$, and
let\linebreak $f\colon[0,\infty)\to\mathcal{A}$ be a
norm-continuous adapted process. Then for each $t>0$ there exists a
Riemann-Stieltjes type integral $\int_0^t f\,dX$ which is a
densely defined operator on $\mathcal{H}$ affiliated with
$\mathcal{A}$. Moreover, this integral is given as the limit
\[
 \lim_{\|\theta\|\to 0}\sum_{k=1}^m f(t_{k-1})\Delta X(t_k)
\]
on $\mathcal{A}'\varOmega$.
\end{theorem}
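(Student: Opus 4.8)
The plan is to carry over the machinery from the proof of Theorem~\ref{mi}. For a partition $\theta=\{0=t_0<\dots<t_m=t\}$ the same martingale computation gives $S^r_\theta\varOmega=\sigma^r_\theta\,X(t)\varOmega$, where $\sigma^r_\theta=\sum_{k=1}^m f(t_{k-1})(P_{t_k}-P_{t_{k-1}})$; note also that $S^r_\theta\in\mathcal{A}$ and hence commutes with $\mathcal{A}'$, so the density and affiliation arguments at the end of Theorem~\ref{mi} will apply verbatim. Since $f$ need not be hermitian here, the goal is no longer monotone convergence along the refining net but a quantitative Cauchy estimate: I would show that $\{\sigma^r_\theta\}$ is Cauchy in the operator norm as $\|\theta\|\to0$, which immediately yields convergence of $S^r_\theta\varOmega=\sigma^r_\theta X(t)\varOmega$ in $\mathcal{H}$ and, after applying a bounded $x'\in\mathcal{A}'$, the stated mesh-limit on $\mathcal{A}'\varOmega$.

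The computational heart is the following norm principle, which is the estimate $\|\sigma^r_\theta\|\leqslant c$ of Theorem~\ref{mi} isolated for general, not necessarily self-adjoint, coefficients. Suppose $0=u_0<u_1<\dots<u_N$, put $E_j=P_{u_j}-P_{u_{j-1}}$, and let $h_j\in\mathcal{A}_{u_{j-1}}$. Because $P_s\in\mathcal{A}_s'$, each $E_j$ commutes with every element of $\mathcal{A}_{u_{j-1}}$, and hence (adapted algebras being $*$-subalgebras) with all of $\mathcal{A}_{u_{i-1}}$ for $i\leqslant j$; combined with the mutual orthogonality $E_iE_j=\delta_{ij}E_i$ this forces all cross terms in $\big(\sum_j h_jE_j\big)^*\big(\sum_j h_jE_j\big)$ to vanish, leaving
\[
 \Big(\sum_{j=1}^N h_jE_j\Big)^*\Big(\sum_{j=1}^N h_jE_j\Big)
 =\sum_{j=1}^N E_j\,h_j^*h_j\,E_j\leqslant\big(\max_j\|h_j\|^2\big)\sum_{j=1}^N E_j,
\]
so that $\big\|\sum_j h_jE_j\big\|\leqslant\max_j\|h_j\|$. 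The point to stress is that orthogonality of the projection increments replaces what would otherwise be a sum $\sum_j\|h_j\|$ by the maximum, and this is precisely what makes a mesh-limit accessible.

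I would then apply this to a one-sided comparison. Let $\omega_f$ denote the modulus of uniform continuity of $f$ on the compact interval $[0,t]$, and let $\theta'\supseteq\theta$ be a refinement. Telescoping each block of $\theta'$ lying inside a subinterval $[t_{k-1},t_k]$ of $\theta$ shows that $\sigma^r_{\theta'}-\sigma^r_\theta$ has exactly the form $\sum_j h_jE_j$ above, where the $E_j$ are now the increments of $\theta'$ and each coefficient $h_j=f(u_{j-1})-f(t_{k-1})$ lies in $\mathcal{A}_{u_{j-1}}$ and satisfies $\|h_j\|\leqslant\omega_f(\|\theta\|)$. The norm principle then gives $\|\sigma^r_{\theta'}-\sigma^r_\theta\|\leqslant\omega_f(\|\theta\|)$. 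For arbitrary partitions $\theta_1,\theta_2$ I would pass through the common refinement $\theta_1\cup\theta_2$ to obtain $\|\sigma^r_{\theta_1}-\sigma^r_{\theta_2}\|\leqslant\omega_f(\|\theta_1\|)+\omega_f(\|\theta_2\|)$, which tends to $0$ as the meshes shrink. Hence $\{\sigma^r_\theta\}$ converges in operator norm to a bounded operator $\Sigma$ on $\mathcal{H}$; the same estimate shows it converges to $\Sigma$ along the refining net as well, so the Riemann-Stieltjes integral in the sense of Section~1 exists and coincides with the mesh-limit. In particular $S^r_\theta\varOmega\to\Sigma X(t)\varOmega$, and $\int_0^t f\,dX$ is defined on the dense domain $\mathcal{A}'\varOmega$ by $x'\varOmega\mapsto x'\Sigma X(t)\varOmega=\lim_{\|\theta\|\to0}S^r_\theta(x'\varOmega)$; density of $\mathcal{A}'\varOmega$ and affiliation with $\mathcal{A}''=\mathcal{A}$ follow exactly as in Theorem~\ref{mi}.

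The main obstacle is the estimate of the preceding paragraph: one must verify that the difference $\sigma^r_{\theta'}-\sigma^r_\theta$ genuinely retains the block structure, so that its norm is controlled by $\max_j\|h_j\|$ rather than by an uncontrolled sum. This rests on combining adaptedness (each coefficient sitting in the algebra attached to the left endpoint of its own subinterval) with the orthogonality of the projection increments $P_{u_j}-P_{u_{j-1}}$; once this is in hand, uniform norm-continuity of $f$ does the rest, and --- in contrast to Theorem~\ref{mi} --- no use is made of hermiticity or monotonicity of $f$.
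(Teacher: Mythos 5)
Your proof is correct and follows essentially the same route as the paper's: the same telescoping decomposition of $S^r_{\theta'}-S^r_{\theta}$ over a refinement, the same use of uniform norm-continuity together with the orthogonality of the increments $P_{u_j}-P_{u_{j-1}}$ and their commutation with the adapted coefficients, and the same passage through a common refinement of two partitions of small mesh. The only difference is that you run the estimate at the operator level, obtaining $\|\sigma^r_{\theta'}-\sigma^r_{\theta}\|\leqslant\omega_f(\|\theta\|)$ and hence operator-norm convergence of the $\sigma^r_{\theta}$ (a marginally stronger conclusion), whereas the paper estimates only $\|(S^r_{\theta''}-S^r_{\theta'})\varOmega\|_{\mathcal{H}}$ by applying the Pythagorean identity to the mutually orthogonal vectors $(P_{t_i^{(k)}}-P_{t_{i-1}^{(k)}})\bigl[f(t_{i-1}^{(k)})-f(t_{k-1})\bigr]X(t)\varOmega$.
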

\begin{proof}
Fix $t>0$. We shall show that the net $\{S_{\theta}^r\varOmega\}$
is Cauchy as the mesh $\|\theta\|$ of the partition $\theta$ tends
to 0. Take an arbitrary $\varepsilon>0$, and let $\delta>0$ be
such that for each $t',t''\in[0,t]\text{ with }|t'-t''|<\delta$ we
have
\[
 \|f(t')-f(t'')\|<\frac{\varepsilon}{2\|X(t)\varOmega\|_{\mathcal{H}}}.
\]
Let $\theta'=\{0=t_0<t_1<\dots<t_m=t\}$ be an arbitrary partition
of $[0,t]$ with $\|\theta'\|<\delta$, and let $\theta''$ be a
partition of $[0,t]$ finer than $\theta'$. Denote by $t_0^{(k)},
t_1^{(k)},\dots,t_{l_k}^{(k)}$ the points of $\theta''$ lying
between $t_{k-1}\text{ and }t_k$, such that
$t_{k-1}=t_0^{(k)}<t_1^{(k)}<\dots<t_{l_k}^{(k)}=t_k$. We then
have
\begin{align*}
 S_{\theta''}^r&=\sum_{k=1}^m\sum_{i=1}^{l_k}f(t_{i-1}^{(k)})
 \Delta X(t_i^{(k)})
 \\
 S_{\theta'}^r&=\sum_{k=1}^mf(t_{k-1})\Delta X(t_k)=
 \sum_{k=1}^m\sum_{i=1}^{l_k}f(t_{k-1})\Delta X(t_i^{(k)}),
\end{align*}
so that
\[
 S_{\theta''}^r-S_{\theta'}^r=\sum_{k=1}^m\sum_{i=1}^{l_k}
 [f(t_{i-1}^{(k)})-f(t_{k-1})]\Delta X(t_i^{(k)}).
\]
As in the proof of Theorem \ref{mi} we have
\begin{align*}
 (S_{\theta''}^r-S_{\theta'}^r)\varOmega=&\sum_{k=1}^m
 \sum_{i=1}^{l_k}\left[f(t_{i-1}^{(k)})-f(t_{k-1})\right]
 \Delta X(t_i^{(k)})\varOmega\\
 =&\sum_{k=1}^m\sum_{i=1}^{l_k}\left[f(t_{i-1}^{(k)})-f(t_{k-1})\right]
 (P_{t_i^{(k)}}-P_{t_{i-1}^{(k)}})X(t)\varOmega\\=&(\sigma_{\theta''}^r
 -\sigma_{\theta'}^r)X(t)\varOmega,
\end{align*}
and thus
\[
 \|(S_{\theta''}^r-S_{\theta'}^r)\varOmega\|_2^2=
 \sum_{k=1}^m\sum_{i=1}^{l_k}\|(P_{t_i^{(k)}}-P_{t_{i-1}^{(k)}})
 \big[f(t_{i-1}^{(k)})-f(t_{k-1})\big]
 X(t)\varOmega\|_{\mathcal{H}}^2
\]
by the orthogonality of $P_{t_i^{(k)}}-P_{t_{i-1}^{(k)}}\text{ and
}P_{t_j^{(k)}}-P_{t_{j-1}^{(k)}}\text{ for }i\ne j$. Furthermore
\begin{align*}
 &\|(P_{t_i^{(k)}}-P_{t_{i-1}^{(k)}})\big[f(t_{i-1}^{(k)})-f(t_{k-1})\big]
 X(t)\varOmega\|_{\mathcal{H}}^2\\=&\langle(P_{t_i^{(k)}}-P_{t_{i-1}^{(k)}})
 |f(t_{i-1}^{(k)})-f(t_{k-1})|^2(P_{t_i^{(k)}}-P_{t_{i-1}^{(k)}})
 X(t)\varOmega,X(t)\varOmega\rangle,
\end{align*}
and since $|t_{i-1}^{(k)}-t_{k-1}|<\delta$, we have
\[
 |f(t_{i-1}^{(k)})-f(t_{k-1})|^2\leqslant
 \frac{\varepsilon^2}{4\|X(t)\varOmega\|_{\mathcal{H}}^2}\boldsymbol{1},
\]
giving
\[
 (P_{t_i^{(k)}}-P_{t_{i-1}^{(k)}})|f(t_{i-1}^{(k)})-f(t_{k-1})|^2
 (P_{t_i^{(k)}}-P_{t_{i-1}^{(k)}})\leqslant\frac{\varepsilon^2}
 {4\|X(t)\varOmega\|_{\mathcal{H}}^2}(P_{t_i^{(k)}}-P_{t_{i-1}^{(k)}}).
\]
This yields the estimate
\begin{align*}
 &\|(P_{t_i^{(k)}}-P_{t_{i-1}^{(k)}})\big[f(t_{i-1}^{(k)})-f(t_{k-1})\big]
 X(t)\varOmega\|_{\mathcal{H}}^2\\
 =&\langle(P_{t_i^{(k)}}-P_{t_{i-1}^{(k)}})
 |f(t_{i-1}^{(k)})-f(t_{k-1})|^2(P_{t_i^{(k)}}-P_{t_{i-1}^{(k)}})
 X(t)\varOmega,X(t)\varOmega\rangle\\ \leqslant
 &\frac{\varepsilon^2}{4\|X(t)\varOmega\|_{\mathcal{H}}^2}
 \|(P_{t_i^{(k)}}-P_{t_{i-1}^{(k)}})X(t)\varOmega\|_{\mathcal{H}}^2.
\end{align*}
Consequently, we obtain
\begin{equation}\label{e1}
 \begin{aligned}
 \|(S_{\theta''}^r-S_{\theta'}^r)\varOmega\|_{\mathcal{H}}^2
 &=\sum_{k=1}^m\sum_{i=1}^{l_k}\|(P_{t_i^{(k)}}-P_{t_{i-1}^{(k)}})
 \big[f(t_{i-1}^{(k)})-f(t_{k-1})\big]X(t)\varOmega\|_{\mathcal{H}}^2\\
 &\leqslant\frac{\varepsilon^2}{4\|X(t)\varOmega\|_{\mathcal{H}}^2}
 \sum_{k=1}^m\sum_{i=1}^{l_k}\|(P_{t_i^{(k)}}-P_{t_{i-1}^{(k)}})
 X(t)\varOmega\|_{\mathcal{H}}^2\\
 &=\frac{\varepsilon^2}{4\|X(t)\varOmega\|_{\mathcal{H}}^2}
 \Big\|\sum_{k=1}^m\sum_{i=1}^{l_k}(P_{t_i^{(k)}}-P_{t_{i-1}^{(k)}})
 X(t)\varOmega\Big\|_{\mathcal{H}}^2\\
 &=\frac{\varepsilon^2}{4\|X(t)\varOmega\|_{\mathcal{H}}^2}
 \|(P_t-P_0)X(t)\varOmega\|_{\mathcal{H}}^2\\
 &\leqslant\frac{\varepsilon^2}{4\|X(t)\varOmega\|_{\mathcal{H}}^2}
 \|X(t)\varOmega\|_{\mathcal{H}}^2=\frac{\varepsilon^2}{4}.
 \end{aligned}
\end{equation}
Let now $\theta_1\text{ and }\theta_2$ be arbitrary partitions of
$[0,t]$ such that \linebreak
$\|\theta_1\|<\delta,\,\|\theta_2\|<\delta$, and let
$\theta''=\theta_1\cup\theta_2$. Then we have by \eqref{e1}
\[
 \|(S_{\theta''}^r-S_{\theta_1}^r)\varOmega\|_{\mathcal{H}}<\frac{\varepsilon}
 {2}\quad\text{and}\quad\|(S_{\theta''}^r-S_{\theta_2}^r)
 \varOmega\|_{\mathcal{H}}<\frac{\varepsilon}{2},
\]
so
\[
 \|(S_{\theta_1}^r-S_{\theta_2}^r)\varOmega\|_{\mathcal{H}}<\varepsilon,
\]
showing that the net $\{S_{\theta}^r\varOmega\}$ is Cauchy. Thus
there exists $\lim_{\|\theta\|\to 0}S_{\theta}^r\varOmega$ and the
rest of the proof is the same as that of Theorem \ref{mi}.
\end{proof}
\section{Integrals as operators on $\mathcal{H}$ --- the tracial case}
Let us now assume that $\omega$ is a normal tracial state. Recall
that the Lebesgue space $L^2(\mathcal{A},\omega)$ is formally
defined as the completion of $\mathcal{A}$ with respect to the
norm
\[
 \|x\|_2=\left[\omega(x^*x)\right]^{1/2}=\|x\varOmega\|_{\mathcal{H}},
\]
and may be realized as a space of densely defined closed operators
affiliated with $\mathcal{A}$ such that $\varOmega$ belongs to
their domains.

For the von Neumann subalgebras $\mathcal{A}_t$ the normal
$\omega$-invariant conditional expectations $\mathbb{E}_t
\colon\mathcal{A}\to\mathcal{A}_t$ extend to orthogonal
projections (denoted by the same letter) from
$L^2(\mathcal{A},\omega)\text{ onto }L^2(\mathcal{A}_t,\omega)$.
If we define an operator $P_t\text{ on }\mathcal{H}$ by
\begin{equation}\label{e2}
 P_t(X\varOmega)=(\mathbb{E}_tX)\varOmega,\qquad X\in
 L^2(\mathcal{A},\omega),
\end{equation}
then $P_t$ is an orthogonal projection from $\mathcal{A}'_t$. (The
definition above is the same as that given by \eqref{1} for
$X\in\mathcal{A}$.) For any $x\in\mathcal{A},\,A\in
L^2(\mathcal{A},\omega)$ the operators $xA\text{ and }Ax$ belong to
$L^2(\mathcal{A},\omega)$, consequently we may again consider the
integral sums
\[
 \sum_{k=1}^m f(t_{k-1})\Delta X(t_k),\qquad\sum_{k=1}^m \Delta
 X(t_k)f(t_{k-1}),
\]
where $f\colon[0,\infty)\to\mathcal{A},\: X\colon[0,\infty)\to
L^2(\mathcal{A},\omega)$, as operators on $\mathcal{H}$. We shall
use the notation
\[
 S_{\theta}^r(f,X)=\sum_{k=1}^m f(t_{k-1})\Delta X(t_k),\qquad
 S_{\theta}^l(f,X)=\sum_{k=1}^m \Delta X(t_k)f(t_{k-1}),
\]
for the integral sums, to indicate their dependence on $f\text{ and
}X$. Then since $S_{\theta}^r(f,X)\text{ and }S_{\theta}^l(f,X)$
are affiliated with $\mathcal{A}$, we have an explicit description
of their actions on $\mathcal{A}'\varOmega$ as
\[
 S_{\theta}^r(f,X)(x'\varOmega)=x'S_{\theta}^r(f,X)\varOmega,\qquad
 S_{\theta}^l(f,X)(x'\varOmega)=x'S_{\theta}^l(f,X)\varOmega.
\]
\begin{theorem}\label{ti}
Let $(X(t):t\geqslant 0)$ be a martingale in
$L^2(\mathcal{A},\omega)$ and let
$f\colon[0,\infty)\to\mathcal{A}$ be either monotone or norm
continuous. Then for each $t>0$ there exist integrals $\int_0^t
f\,dX\text{ and }\int_0^t dX\,f$ as elements of
$L^2(\mathcal{A},\omega)$. Moreover, the
$L^2(\mathcal{A},\omega)$-processes
$(Y(t):t\geqslant0)$,\newline$(Z(t):t\geqslant0)$ defined by
\[
 Y(t)=\int_0^t dX\,f, \qquad Z(t)=\int_0^t f\,dX
\]
are martingales.
\end{theorem}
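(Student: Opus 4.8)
The plan is to construct the right integral $\int_0^t f\,dX$ first, deduce the left integral $\int_0^t dX\,f$ by an involution argument, and finally establish the martingale property; the traciality enters chiefly through the identification of $L^2(\mathcal{A},\omega)$ with $\mathcal{H}$ and the isometry of the $*$-operation.

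First I would observe that the construction of $\int_0^t f\,dX$ is literally the same as in Theorems \ref{mi} and \ref{ci}. Although $X$ is now an $L^2$-martingale, the vector $X(t)\varOmega$ still lies in $\mathcal{H}$, with $\|X(t)\varOmega\|_{\mathcal{H}}=\|X(t)\|_2$, and \eqref{e2} together with the martingale equality $\mathbb{E}_{t_k}X(t)=X(t_k)$ again gives $\Delta X(t_k)\varOmega=(P_{t_k}-P_{t_{k-1}})X(t)\varOmega$. Hence $S_\theta^r(f,X)\varOmega=\sigma_\theta^r\,X(t)\varOmega$ with $\sigma_\theta^r$ as in \eqref{2}, and the monotone (increasing bounded net) resp.\ norm-continuous (Cauchy estimate \eqref{e1}) argument yields convergence of $\{S_\theta^r(f,X)\varOmega\}$ in $\mathcal{H}$ verbatim. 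The essential new point is that, $\omega$ being tracial, $A\mapsto A\varOmega$ is an isometric isomorphism of $L^2(\mathcal{A},\omega)$ onto $\mathcal{H}=\overline{\mathcal{A}\varOmega}$; thus the limiting vector is $Z(t)\varOmega$ for a unique $Z(t)=\int_0^t f\,dX\in L^2(\mathcal{A},\omega)$, and the convergence in fact takes place in $L^2$.

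For the left integral I would invoke the $*$-operation. Since each $\mathbb{E}_s$ commutes with the involution, the process $X^*:=(X(t)^*)$ is again an $L^2$-martingale, while $f^*:=(f(t)^*)$ is norm-continuous (and in the monotone case $f$ is hermitian, so $f^*=f$). A direct check gives $S_\theta^l(f,X)=S_\theta^r(f^*,X^*)^*$, and because the involution is isometric on $L^2(\mathcal{A},\omega)$, the already-established convergence of $S_\theta^r(f^*,X^*)$ yields $\int_0^t dX\,f=\big(\int_0^t f^*\,dX^*\big)^*\in L^2(\mathcal{A},\omega)$. To prove that $Z$ is a martingale, I would fix $s\leqslant t$ and restrict to partitions $\theta$ of $[0,t]$ containing $s=t_j$; these are cofinal, and since $\mathbb{E}_s$ is an $L^2$-contraction, $\mathbb{E}_sZ(t)=\lim_\theta\mathbb{E}_sS_\theta^r(f,X)$. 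Evaluating $\mathbb{E}_s\big[f(t_{k-1})\Delta X(t_k)\big]$ termwise via the bimodule and tower properties: for $t_k\leqslant s$ the term lies in $L^2(\mathcal{A}_s,\omega)$ and is unchanged, while for $t_{k-1}\geqslant s$ one has $\mathbb{E}_s\big[f(t_{k-1})\Delta X(t_k)\big]=\mathbb{E}_s\big(f(t_{k-1})\mathbb{E}_{t_{k-1}}\Delta X(t_k)\big)=0$ using $\mathbb{E}_{t_{k-1}}\Delta X(t_k)=0$. Hence $\mathbb{E}_sS_\theta^r(f,X)=S_{\theta\cap[0,s]}^r(f,X)\to Z(s)$, so $\mathbb{E}_sZ(t)=Z(s)$. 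The martingale property of $Y$ then follows from that of $\int_0^t f^*\,dX^*$ together with $\mathbb{E}_s(W^*)=(\mathbb{E}_sW)^*$.

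The main obstacle I expect is precisely the vanishing of the ``future'' terms in this conditional-expectation computation—the noncommutative counterpart of the fact that a stochastic integral has conditionally vanishing increments—which demands careful combined use of the bimodule and tower properties, along with the justification that $\mathbb{E}_s$ may be interchanged with the refining-net limit, via its $L^2$-contractivity and the cofinality of partitions passing through $s$.
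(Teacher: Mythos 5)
Your proposal is correct and follows essentially the same route as the paper: the same reduction of $S_\theta^r(f,X)\varOmega$ to $\sigma_\theta^rX(t)\varOmega$ via \eqref{e2}, the same passage from the left sum to the right via $S_\theta^l(f,X)=[S_\theta^r(f^*,X^*)]^*$ and traciality, and the same termwise conditional-expectation computation (tower plus bimodule properties, with $s$ inserted into the partition) for the martingale property. The only cosmetic difference is that the paper carries out the martingale computation for the left integral and says ``analogously'' for the right, whereas you do the right integral first and transfer to the left by the involution.
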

\begin{proof} The existence of the integral $\int_0^t f\,dX$ is
proved exactly as in Theorems \ref{mi} and \ref{ci} upon observing
that according to formula \eqref{e2} we have
\[
 \left[X(t_k)-X(t_{k-1})\right]\varOmega=\left[\mathbb{E}_{t_k}X(t)
 -\mathbb{E}_{t_{k-1}}X(t)\right]\varOmega=(P_{t_k}-P_{t_{k-1}})X(t)\varOmega.
\]
It follows that the net $\{S_{\theta}^r(f,X)\varOmega\}$ is
Cauchy, and since
\[
 \|S_{\theta}^r(f,X)\|_{\mathcal{H}}=\|S_{\theta}^r(f,X)\|_2,
\]
$\{S_{\theta}^r(f,X)\}$ converges in $\|\cdot\|_2$-norm, and
thus its limit is an element of $L^2(\mathcal{A},\omega)$.

For $S_{\theta}^l(f,X)$ we have
\[
 S_{\theta}^l(f,X)=\left[S_{\theta}^r(f^*,X^*)\right]^*;
\]
so we obtain
\[
 \|S_{\theta}^l(f,X)\|_2=\|\left[S_{\theta}^r(f^*,X^*)\right]^*\|_2
 =\|S_{\theta}^r(f^*,X^*)\|_2,
\]
because $\varOmega$ is tracial. Since $f^*$ satisfies the same
assumptions as $f$, and $(X(t)^*:t\geqslant0)$ is also an
$L^2(\mathcal{A},\omega)$--martingale, we obtain the convergence of
$\{S_{\theta}^l(f,X)\}$ in $\|\cdot\|_2$-norm.

Now we shall show that $(Y(t):t\geqslant0)$ is a martingale. Fix
$t>0$ and take an arbitrary $s<t$. We have
\[
 \int_0^t dX\,f = \lim_{\|\theta\|\to 0}S_{\theta}^l.
\]
We may assume that $s$ is one of the points of each partition
\linebreak $\theta=\{0=t_0<t_1<\cdots<t_m=t\}$, say $s=t_k$. Then we
have
\begin{align*}
 &\mathbb{E}_s S_{\theta}^l=\mathbb{E}_s(\sum_{i=1}^k
 [X(t_i)-X(t_{i-1})]f(t_{i-1})+\sum_{i=k+1}^m
 [X(t_i)-X(t_{i-1})]f(t_{i-1}))\\
 &=\sum_{i=1}^k\mathbb{E}_s[X(t_i)-X(t_{i-1})]f(t_{i-1})+
 \sum_{i=k+1}^m\mathbb{E}_s[X(t_i)-X(t_{i-1})]f(t_{i-1}).
\end{align*}
For $i\leqslant k$ we have $t_i\leqslant s$, and thus
\[
 \mathbb{E}_s[X(t_i)-X(t_{i-1})]f(t_{i-1})=[X(t_i)-X(t_{i-1})]f(t_{i-1}),
\]
while for $i>k$ we have $t_{i-1}\geqslant s$, and thus
\begin{align*}
 &\mathbb{E}_s[X(t_i)-X(t_{i-1})]f(t_{i-1})=
 \mathbb{E}_s\mathbb{E}_{t_{i-1}}[X(t_i)-X(t_{i-1})]f(t_{i-1})\\
 =&\mathbb{E}_s(\mathbb{E}_{t_{i-1}}[X(t_i)-X(t_{i-1})])f(t_{i-1})=0
\end{align*}
by the martingale property. Consequently,
\begin{equation}\label{e3}
 \mathbb{E}_s
 S_{\theta}^l=\sum_{i=1}^k[X(t_i)-X(t_{i-1})]f(t_{i-1}).
\end{equation}
But the sum on the right hand side of \eqref{e3} is an integral
sum for the integral $\int_0^s dX\,f$, and passing to the limit in
\eqref{e3} yields
\[
 \mathbb{E}_s\int_0^t dX\,f = \int_0^s dX\,f,
\]
which shows that $(Y(t))$ is a martingale. Analogously
for\linebreak $(Z(t):t\geqslant0)$.
\end{proof}
\begin{remark}
1. The formulas
\[
 \bigg(\int_0^t f\,dX\bigg)(x'\varOmega)=
 x'\,\lim_{\theta}S_{\theta}^r\varOmega,\qquad
 \bigg(\int_0^t dX\,f\bigg)(x'\varOmega)=
 x'\,\lim_{\theta}S_{\theta}^l\varOmega
\]
describe explicitly the actions of the integrals on
$\mathcal{A}'\varOmega$.
\par 2. Let us notice that an attempt to define Lebesgue type
integrals above, e.g., along the lines of \cite{4} or \cite{10},
would be successful only in the simple case of norm-continuous $f$,
and even then under the additional assumption of left- or
right-continuity in $\|\cdot\|_2$-norm of the martingale $(X(t))$.
The reason for this is that this type of integral is defined for
$\mu$--measurable functions $f\in L^2([0,\infty),\mu,\mathcal{A})$,
where $\mu$ is a measure defined by
\[
 \mu([a,b))\text{ or }\mu((a,b])=\omega(|X(b)-X(a)|^2)
 =\omega(|X(b)|^2)-\omega(|X(a)|^2).
\]
The continuity of the martingale allows then the extension of $\mu$
from the intervals to the Borel sets. For the case of monotone $f$
the failure of the definition of the integral as Lebesgue type is
most strikingly seen when the function $f$ is increasing
projection-valued. To be more concrete, assume that $e$ is a spectral
measure with support $[0,\infty)$, and put $f(t)=e([0,t])$. Then for
$s < t$, $f(t) - f(s)$ is a non-zero projection, so $\|f(t) -
f(s)\|=1$, which means that $f$ is not norm continuous either from
the left or from the right at any point, while a $\mu$--measurable
function must be norm-continuous on some compact set.
\end{remark}

\end{document}